\def\gp#1{\langle#1\rangle}
\def\m1{^{-1}}
\newtheorem{theorem}{Theorem}
\newtheorem{lemma}{Lemma}
\newtheorem{corollary}{Corollary}
\newtheorem*{problem}{Problem}
\title[Free subgroups]
{Free subgroups in    group rings}
\author{Victor A.~Bovdi}
\address{
\texttt{VICTOR BOVDI}
\newline
Department of Math. Sciences, UAE University, Al-Ain,
\newline United Arab Emirates} \email{vbovdi@gmail.com}
\thanks{The research was supported by FAPESP (Brazil) Process N: 2014/18318-7.}
\subjclass{Primary: 16S34, 16U60}
\keywords{integral group ring,  free group, free product, unit}
\begin{document}

\begin{abstract}
Let $V(\mathbb{K}G)$ be the normalized   group of  units of the group ring $\mathbb{K}G$ of a non-Dedekind  group $G$ with  nontrivial torsion part $t(G)$ over the integral domain $\mathbb{K}$.
We give a simple method for constructing free objects in $V(\mathbb{K}G)$.
In particular, we show  that $V(\mathbb{K}G)$  always contains  the free product $C_n \star C_n$ of two finite cyclic groups. We construct
examples of subgroups in  $V(\mathbb{K}G)$ which  are either  cyclic extensions of a non-abelian free group or $C_n \star C_n$.\end{abstract}
\maketitle

Let $V(\mathbb{K}G)$ be the group of normalized units of the group ring $\mathbb{K}G$ of a group  $G$  with  nontrivial torsion part (i.e. the set of elements of finite order) $t(G)$ over the integral domain $\mathbb{K}$.

In their classic paper, B.~Hartley and P.~F. Pickel  (see \cite{Hartley_Pickel}) proved
that if $G$ is  a finite non-Dedekind  group, then $V(\mathbb{Z}G)$   contains a free group of rank $2$. After the publication of this result, several authors have  studied the following problem:  When do  two special units generate   a free group of rank $2$?  A fundamental result was published by  A.~Salwa \cite{Salwa}, who proved that two noncommuting  unipotent elements $\{1+x, 1+x^*\}$ of $\mathbb{Z}G$ always generate   a free group of rank $2$, where $x$ is a nilpotent element and $*$ is the classical involution of $\mathbb{Z}G$.

As an example for the  unipotent element $1+x$ in Salwa's  paper, one can take the bicyclic unit $u_{a,b}=1+(a-1)b\widehat{a}\in V(\mathbb{K}G)$ (see the notation
below). This example raises the following problem.
\begin{problem}
When does a unit $w\in V(\mathbb{K}G)$ exist with the property that $\gp{u_{a,b}, w}$ contains  a free subgroup of rank $2$ for  fixed  $a,b\in G$?
\end{problem}
Similar problems  were  studied  in several papers by A.~Dooms,  J.~Gon\-calves, R.M.~Guralnick, E.~Jespers,
V.~Jim\'enez, L.~Margolis, Z.~Marci\-niak, D.~Passman, A.~del Rio, M.~Ruiz and  S.~ Sehgal.
As the literature of this problem is quite voluminous, we do not cite
particular papers,
as it would be impossible to do justice to the researchers of this field.
Nevertheless the reader
can easily find the  papers relevant to this problem.

Theorem 2 of our  paper provides an explicit answer to this question, proving that  for fixed $a,b\in G$, it is enough to choose $w=a^k\in G$.

For the group of units of group rings of several groups Theorem 1  provides
a simple alternative proof of the classical result of B.~Hartley
and P.~F. Pickel. Moreover, we prove   that the  group ring $\mathbb{K}G$ of a non-Dedekind group  $G$
which has at least one non-normal finite cyclic subgroup  of order  $n$  always contains the free product  $C_n \star C_n$ as a subgroup. Furthermore, this subgroup $C_n \star C_n$  can  be generated normally by a single element. Several problems in group  theory and the theory of small dimensional topology can be reduced to the question whether a given group can be normally generate by a single element.  In particular  the Relation Gap problem, Wall's $D2$ Conjecture, the Kervaire Conjecture, Wiegold's Problem, Short's Conjecture and the Scott-Wiegold Conjecture (for example, see \cite{Kourovka_book}, Questions 5.52, 5.53 and 17.94).

Finally,  in Lemma 2,  we introduce and study a new family of torsion and non-torsion units in  $V(\mathbb{K}G)$. This  lemma might have some significance in itself.

We denote by $C_n$ and $C_\infty$  the cyclic group of  order $n$ and of
infinite order, respectively.
If $A,B\leq G$ are  subgroups of  $G$, then we denote by $A\star B$ the free product of these subgroups.
Denote  the normalizer of a subgroup $H$ in $G$ by ${\mathfrak N}_G(H)$. If $|a|$ is the order of  $a\in t(G)$, then we put  $\widehat{a}=\sum_{i=1}^{|a|}a^i\in \mathbb{Z}G$. If $x=\sum_{g\in G}\alpha_gg\in \mathbb{K}G$,
then  $supp(x)$ denotes  the set $\{g\in G \mid \alpha_g\not=0\}$. The $gcd$ of the natural numbers $k$ and $l$ is denoted by $(k,l)$.

Let $\mathfrak{C}$ be a  class of groups. The group $G$ is called  {\it residual} for $\mathfrak{C}$,
if for each $g\in G\setminus  \{1\}$, there exists a normal subgroup $N\lhd G$ such that
$g\not\in N$ and $G/N \in \mathfrak{C}$.

\bigskip

Our main results are the following.

\begin{theorem}\label{T:1}
Let $\mathbb{K}$ be an integral domain and let $G$ be a group which has at least one non-normal
finite cyclic subgroup $\gp{a}$ of order $|a|$. Let   $b\in G\setminus {\mathfrak N}_G(\gp{a})$.
Additionally let  $M$ be the smallest  integer  with the properties that
$2\leq M\leq |a|$ and $b\in {\mathfrak N}_G(\gp{a^{M}})$.

Let $1\leq k< |a|$ with the property  that $b\not \in {\mathfrak N}_G(\gp{a^{k}})$. The elements  $\frak{u}_k=a^k+(a-1)b\widehat{a}$ and
$\frak{z}_k=w\m1\frak{u}_k w$ are units in $\mathbb{K}G$, where $w=1+(a-1)b\widehat{a}\in V(\mathbb{K}G)$ and the following  hold:
\begin{itemize}
\item[(i)]
if $(k,|a|)=1$, then $\gp{\frak{u}_k, \frak{z}_k}\cong C_{|a|}\star C_{|a|}$;

\item[(ii)]
if $(k,|a|)\not=1$,  then $\gp{\frak{u}_k, \frak{z}_k}\cong C_{s}\star C_{s}$, where  $s=\frac{|a|}{(k,|a|)}$  if   $(k,M)=1$ and $M\neq |a|$, otherwise
\[
\gp{\frak{u}_k, \frak{z}_k}\cong C_{\infty}\star C_{\infty}.
\]

\end{itemize}
\end{theorem}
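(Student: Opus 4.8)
The plan is to split the problem into two independent parts: first to determine the orders of $\mathfrak{u}_k$ and $\mathfrak{z}_k$, and then to identify the isomorphism type of $\langle \mathfrak{u}_k,\mathfrak{z}_k\rangle$ by a ping-pong argument. I would start by recording the elementary algebra of $x=(a-1)b\widehat{a}$. Since $\widehat{a}(a-1)=\widehat{a}a-\widehat{a}=0$ and $\widehat{a}a^{j}=\widehat{a}$ for every $j$, one gets at once $x^{2}=0$ and $xa^{j}=x$; hence $w=1+x$ is a unit with $w^{-1}=1-x$, and a one-line computation gives $\mathfrak{u}_k=wa^{k}$ and $\mathfrak{z}_k=w^{-1}\mathfrak{u}_k w=a^{k}w$. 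In particular both are units, as asserted, and both lie in $\langle w,a^{k}\rangle$.

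For the orders I would compute powers directly. Writing $a^{jk}wa^{-jk}=1+a^{jk}x$ and using $xa^{jk}=x$ together with $x^{2}=0$, the mixed products telescope and one obtains the clean formulas $\mathfrak{u}_k^{\,e}=a^{ek}+(a-1)S_e\,b\widehat{a}$ with $S_e=\sum_{j=0}^{e-1}a^{jk}$, and likewise $\mathfrak{z}_k^{\,f}=a^{fk}+(a-1)a^{k}S_f\,b\widehat{a}$. Thus $\mathfrak{u}_k^{\,e}=1$ forces $a^{ek}=1$ together with the vanishing of the ``bicyclic tail'' $(a-1)S_e\,b\widehat{a}$. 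Multiplying the tail on the right by $b^{-1}$ turns it into $(a-1)S_e\,\widehat{c}$ with $\widehat{c}=b\widehat{a}b^{-1}$, and a coset-counting argument shows this is $0$ exactly when the partial sums of $(a-1)S_e$ over the cosets of $D=\langle a\rangle\cap b\langle a\rangle b^{-1}$ all vanish. The decisive sub-lemma is that $D=\langle a^{M}\rangle$: writing $D=\langle a^{m}\rangle$, the group $D$ is the unique subgroup of its order inside the cyclic group $b\langle a\rangle b^{-1}$, so it must equal $b\langle a^{m}\rangle b^{-1}$ and is therefore $b$-invariant, which gives $M\le m$; the reverse divisibility $m\mid M$ is clear since $\langle a^{M}\rangle\subseteq D$, whence $m=M$. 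Reducing the vanishing to a counting statement over $\mathbb{Z}/M\mathbb{Z}$ then yields that the tail vanishes iff $(k,M)=1$ and $M\mid e$; combined with $a^{ek}=1$, i.e. $s\mid e$ for $s=|a|/(k,|a|)$, this gives finite order $\operatorname{lcm}(s,M)=s$ precisely when $(k,M)=1$ (using $M\mid s$) and infinite order otherwise, matching the numerology in (i) and (ii).

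The genuinely hard part is the free-product conclusion, and this is where I would expect to spend all the effort and to meet the main obstacle. The natural tool is the ping-pong lemma for the left action of $\langle \mathfrak{u}_k,\mathfrak{z}_k\rangle$ on $\mathbb{K}G$, with the two tables distinguished by whether the support of a test element meets $\langle a\rangle$ or the double coset $\langle a\rangle b\langle a\rangle$; the idea is that so long as every admissible power carries a nonzero bicyclic tail, an alternating reduced word pushes supports back and forth and cannot return to $1$. The difficulty I anticipate is twofold. First, whenever $M<s$ the powers with $M\mid e$ lose their tails and collapse into $\langle a\rangle$, so that $\mathfrak{u}_k^{\,M}=\mathfrak{z}_k^{\,M}=a^{Mk}$; the two cyclic factors then share a nontrivial element and the tables degenerate, and removing this seems to require $a^{Mk}=1$, i.e. $M=s$. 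Second, and more fundamentally, both generators lie in $\langle w,a^{k}\rangle$, and since the conjugates $1+a^{jk}x$ of $w$ commute pairwise (their products have vanishing cross terms), that overgroup is abelian-by-cyclic, hence solvable, so every subgroup of it is solvable. Reconciling this with a nonabelian free product is the step I would scrutinise before anything else; it suggests that the effective second generator should be genuinely transverse to $\langle w,a^{k}\rangle$ — in the spirit of Salwa's pairing of $1+x$ with its involution image $1+x^{*}$ rather than with an inner twist of $\mathfrak{u}_k$ by $w$.
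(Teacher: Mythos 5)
Your preliminary computations are all correct: with $x=(a-1)b\widehat{a}$ one has $x^{2}=0$ and $xa^{j}=x$ for every $j$, hence $\frak{u}_k=wa^{k}$, $\frak{z}_k=w\m1(wa^{k})w=a^{k}w$, and your determination of the orders is consistent with the paper's Lemma~\ref{L:2} (which reaches the same numerology by the generalized Berman--Higman trace argument rather than by your coset count for $\gp{a}\cap b\gp{a}b\m1=\gp{a^{M}}$). But the ``obstacle'' you flag at the end is not something to be reconciled or engineered around: it is a refutation of the statement, and you should have pushed it to its conclusion instead of leaving it as a caveat. The set $W_0=\{\,1+h(a)(a-1)b\widehat{a}\mid h(a)\in\mathbb{K}\gp{a}\,\}$ is an abelian group under multiplication (the cross terms die because $\widehat{a}(a-1)=0$), it is normalized by $a$ because $\widehat{a}a^{-j}=\widehat{a}$, and it contains $w$; hence $W_0\gp{a}$ is metabelian and contains both $\frak{u}_k=wa^{k}$ and $\frak{z}_k=a^{k}w$. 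Concretely, $\frak{u}_k\frak{z}_k\m1=1+(1-a^{k})x$ and $\frak{z}_k\m1\frak{u}_k=1+(a^{-k}-1)x$ both lie in $W_0$ and therefore commute, which yields the identity
\[
\frak{u}_k\,\frak{z}_k^{-2}\,\frak{u}_k\,\frak{z}_k\,\frak{u}_k^{-2}\,\frak{z}_k=1
\]
in $V(\mathbb{K}G)$. The word $UZ^{-2}UZU^{-2}Z$ is a reduced alternating word with nontrivial syllables in $C_n\star C_n$ for every $n\geq 3$ and in $C_\infty\star C_\infty$, so $\gp{\frak{u}_k,\frak{z}_k}$ cannot be the asserted free product in case (i) when $|a|\geq 3$, nor in any case where the asserted free product is nonsolvable; only the degenerate possibility $C_2\star C_2$ (infinite dihedral, itself metabelian) survives this test.

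Consequently the paper's own argument must contain an error, and it is worth locating so that you do not mistake your observation for a misunderstanding. The proof of Theorem~\ref{T:1} reduces a word $P_s$ to the form $1+G_1(a)x_{-}+G_2(a)x_{+}+G_3(a)y_{-}+G_4(a)y_{+}$ and shows, by a support and basis argument, that an individual summand such as $G_1(a)x_{-}$ is nonzero; it then asserts that ``applying a similar argument to the nonzero part'' gives $M(a)=G_1(a)x_{-}+G_2(a)x_{+}+G_3(a)y_{-}+G_4(a)y_{+}\neq 0$. That step is false: the four summands have overlapping supports (each $z\in\{x_{\pm},y_{\pm}\}$ splits into a piece $a^{\pm k}-1$ supported on $\gp{a}$ and a piece supported on $\gp{a}b\gp{a}$), and they can and do cancel --- the relation displayed above is an explicit reduced word $P_6$ with $M(a)=0$. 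So your instinct to test the free-product claim against the solvability of the ambient overgroup $\gp{w,a^{k}}$ before building ping-pong tables was exactly the right move; the only thing missing from your write-up is the last step of converting that solvability into a single explicit relation that is visibly nontrivial in $C_n\star C_n$, which settles the matter. (Note this does not contradict Hartley--Pickel or Salwa: Salwa pairs $1+x$ with $1+x^{*}$, which does not lie in $W_0$, whereas here the second generator is an inner twist of the first and never leaves $W_0\gp{a}$.)
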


\begin{theorem}\label{T:2}
Let $\mathbb{K}$ be an integral domain and let $G$ be a group which has at least one non-normal
finite cyclic subgroup $\gp{a}$  of order $|a|$. Let   $b\in G\setminus {\mathfrak N}_G(\gp{a})$.
Additionally let  $M$ be the smallest  integer with the properties that
$2\leq M\leq |a|$ and $b\in {\mathfrak N}_G(\gp{a^{M}})$.

Let  $1\leq k< |a|$  with the property  that $b\not \in {\mathfrak N}_G(\gp{a^{k}})$.  Put $H_k=\gp{\; 1+(a-1)b\widehat{a},\;   a^k\;} \leq V(\mathbb{K}G).
$ Then the following  hold:
\begin{itemize}
\item[(ii)] if $(k,|a|)=1$  then $H_k$
is a cyclic extension of $C_{|a|}\star C_{|a|}$;

\item[(ii)]
if $(k,|a|)\not=1$ and  $b\not\in {\mathfrak N}_G(\gp{a^k})$, then  $H_k$
is a cyclic extension of $C_{s}\star C_{s}$, where $s=\frac{|a|}{(k,|a|)}$ if
$M\not=|a|$ and  $(k,M)\not=1$, otherwise it is a cyclic extension of a non-abelian free group.
\end{itemize}
Moreover,  in these cases $H_k$  is a residually torsion-free nilpotent group.
\end{theorem}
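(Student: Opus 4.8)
The plan is to derive Theorem~\ref{T:2} from Theorem~\ref{T:1} by placing the free product explicitly inside $H_k$ and then recognising $H_k$ as an extension of it. Write $\nu=(a-1)b\widehat{a}$, so that $w=1+\nu$ and $\nu^{2}=0$. Since $\widehat{a}\,a^{k}=\widehat{a}$ one has $\nu a^{k}=\nu$, whence
\[
\mathfrak{u}_k=a^{k}+\nu=a^{k}+\nu a^{k}=(1+\nu)a^{k}=wa^{k},\qquad \mathfrak{z}_k=w\m1\mathfrak{u}_k w=w\m1(wa^{k})w=a^{k}w .
\]
Thus $\mathfrak{u}_k,\mathfrak{z}_k\in H_k=\gp{w,a^{k}}$, so $F:=\gp{\mathfrak{u}_k,\mathfrak{z}_k}\leq H_k$, and Theorem~\ref{T:1} identifies $F$ with $C_{s}\star C_{s}$ (respectively $C_{\infty}\star C_{\infty}$, or the non-abelian free group) in the various cases. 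Moreover $w=a^{-k}\mathfrak{z}_k$, so $H_k=\gp{F,\,a^{k}}$: the whole group is generated by this free product together with the single torsion element $a^{k}$.

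Next I would prove $F\lhd H_k$ with cyclic quotient. Two of the four generator-conjugates are immediate, $a^{k}\mathfrak{u}_k a^{-k}=\mathfrak{z}_k$ and $w\mathfrak{z}_k w\m1=\mathfrak{u}_k$, so the point is to show that $a^{k}\mathfrak{z}_k a^{-k}$ and $w\mathfrak{u}_k w\m1$ also lie in $F$. A short computation in $\mathbb{K}G$, using only $\nu a^{k}=\nu$ and $\nu^{2}=0$, gives
\[
a^{k}\mathfrak{z}_k a^{-k}=a^{k}+a^{2k}\nu,\qquad w\mathfrak{u}_k w\m1=a^{k}+2\nu-a^{k}\nu ,
\]
and the relation $\widehat{a^{k}}\,\nu=0$—the very relation that forces $\mathfrak{u}_k^{\,s}=1+\widehat{a^{k}}\nu=1$ in the torsion cases of Theorem~\ref{T:1}—provides the linear dependence among the $a^{jk}\nu$ needed to rewrite these as words in $\mathfrak{u}_k,\mathfrak{z}_k$; in the $C_{\infty}\star C_{\infty}$ case the same inclusions are read off from the free normal form of Theorem~\ref{T:1}. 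Granting this, $F$ equals the normal closure of $\mathfrak{u}_k$, so $F\lhd H_k$ and $H_k/F=\gp{\overline{a^{k}}}$ is cyclic of order dividing $s$; combined with the isomorphism type imported from Theorem~\ref{T:1} this yields (i) and (ii). I expect this to be the main obstacle: a priori the normal closure of $\mathfrak{u}_k$ is generated by the $s$ conjugates $a^{jk}\mathfrak{u}_k a^{-jk}$ rather than by two, and the content is precisely that conjugation by $a^{k}$ does not push $F$ outside itself.

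Finally, for the residual statement I would filter $\mathbb{K}G$ by the powers $\Delta^{i}$ of the augmentation ideal (or a $\mathfrak p$-adic refinement). Since $\mathbb{K}$ is an integral domain, the sections $\Delta^{i}/\Delta^{i+1}$ are torsion-free $\mathbb{K}$-modules and the filtration is separating, so the induced central series on $H_k\cap(1+\Delta)$ has torsion-free quotients with trivial intersection, exhibiting the unipotent part of $H_k$ as residually torsion-free nilpotent. The delicate point—and the reason I would leave it to last—is the torsion element $a^{k}$: having finite order it cannot survive in any torsion-free nilpotent quotient, so the literal residually-torsion-free-nilpotent conclusion is only available on a torsion-free normal subgroup, after which one must transport the property back up the cyclic extension $H_k/F$ and verify it case by case. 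Reconciling this with the torsion carried by $F$ in parts (i) and (ii) is where I expect the argument to require the most care.
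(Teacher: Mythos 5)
Your reduction to Theorem~\ref{T:1} is sound as far as it goes: with $\nu=(a-1)b\widehat{a}$ the identities $\nu^2=0$, $\nu a^{\pm k}=\nu$, $\mathfrak{u}_k=wa^k$, $\mathfrak{z}_k=a^kw$ are all correct, as are your ring-element computations $a^k\mathfrak{z}_ka^{-k}=a^k+a^{2k}\nu$ and $w\mathfrak{u}_kw^{-1}=a^k+2\nu-a^k\nu$. But the step you yourself flag as the main obstacle is genuinely missing, and the mechanism you propose for it does not work. The relation $\widehat{a^k}\nu=0$ holds only in the cases where $\mathfrak{u}_k$ has finite order, so it cannot cover the $C_\infty\star C_\infty$ case, and ``read off from the free normal form of Theorem~1'' is not an argument: Theorem~\ref{T:1} describes which words in $\mathfrak{u}_k,\mathfrak{z}_k$ are nontrivial, not which elements of $\mathbb{K}G$ arise as such words. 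The paper closes exactly this gap with the single identity $(\mathfrak{u}_k,w,w)=1$, which holds identically (only $\nu^2=0$ and $\nu a^{\pm k}=\nu$ are used) and is equivalent to the explicit rewritings $w\mathfrak{u}_kw^{-1}=\mathfrak{u}_k\mathfrak{z}_k^{-1}\mathfrak{u}_k$ and $a^k\mathfrak{z}_ka^{-k}=\mathfrak{z}_k^{2}\mathfrak{u}_k^{-1}$ --- precisely your two unresolved conjugates, now as honest words in $F=\gp{\mathfrak{u}_k,\mathfrak{z}_k}$, uniformly in all cases. With that identity the normality of $F$ and the cyclic quotient are immediate.

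The second gap is the ``Moreover'' clause, where your proposal contains no proof: the augmentation-filtration sketch treats only the unipotent part of $H_k$, and you concede you cannot transport the residual property across the extension or reconcile it with the torsion carried by $F$. The paper gets this for free from the same commutator identity: $(\mathfrak{u}_k,w,w)=1$ together with Theorem~\ref{T:1} identifies $H_k$ with the Hydra group $\mathfrak{H}_2(x,y)=\gp{x,y\mid (x,y,y)=1}$ when $\mathfrak{u}_k$ has infinite order, and with $\gp{x,y\mid x^{|\mathfrak{u}_k|}=1,\ (x,y,y)=1}$ otherwise, and then quotes Theorem~1 of Baumslag--Mikhailov for the residual torsion-free nilpotence of these one-relator groups. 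Without identifying $H_k$ with such a presented group (or supplying a substitute for that external input), the residual statement remains unproved in your approach; your observation that torsion in $F$ obstructs a literal reading of the conclusion is a fair remark about the statement, but it does not produce an argument.
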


\begin{corollary}\label{C:1}
Let $\mathbb{K}$ be an integral domain and let $G$ be a group which has at least one non-normal
finite cyclic subgroup $\gp{a}$ of order $|a|$. Then the group of units $U(\mathbb{K}G)$ of the group ring $\mathbb{K}G$ always contains the free product $C_{|a|} \star C_{|a|}$ as a subgroup which is normally generated by a single element.
\end{corollary}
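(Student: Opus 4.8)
The plan is to derive the Corollary directly from Theorems \ref{T:1} and \ref{T:2}, exploiting the fact that $\frak{z}_k$ is by construction a conjugate of $\frak{u}_k$. First I would check that a suitable exponent is always available. Since $\gp{a}$ is non-normal we have ${\mathfrak N}_G(\gp{a})\neq G$, so there is some $b\in G\setminus {\mathfrak N}_G(\gp{a})$, and necessarily $|a|\geq 2$. Taking $k=1$ we have $\gp{a^1}=\gp{a}$, hence $b\notin {\mathfrak N}_G(\gp{a^1})$, and $(1,|a|)=1$. Thus the hypotheses of case (i) of Theorem \ref{T:1} hold with $k=1$, and we obtain a subgroup
\[
S:=\gp{\frak{u}_1,\frak{z}_1}\cong C_{|a|}\star C_{|a|}
\]
of $V(\mathbb{K}G)\leq U(\mathbb{K}G)$. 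This already yields the asserted embedding of the free product.

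Next I would identify $S$ with the distinguished normal subgroup produced by Theorem \ref{T:2}. Write $w=1+(a-1)b\widehat{a}$ and $x=(a-1)b\widehat{a}$. From $a\widehat{a}=\widehat{a}a=\widehat{a}$ one gets $x^2=0$ and $xa=x$, so $w$ is a unit with $w\m1=1-x$, and a one-line computation gives $\frak{u}_1=(1+x)a=wa$ and therefore $\frak{z}_1=w\m1\frak{u}_1 w=aw$. Hence both generators of $S$ lie in $H_1=\gp{w,a}\leq U(\mathbb{K}G)$, so $S\leq H_1$. By case (i) of Theorem \ref{T:2}, $H_1$ is a cyclic extension of a copy of $C_{|a|}\star C_{|a|}$, and by Theorem \ref{T:1} this normal copy is precisely $S$; in particular $S\lhd H_1$ with $H_1/S$ cyclic.

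The concluding step is the normal-generation claim, and here the key point is simply that $\frak{z}_1=w\m1\frak{u}_1 w$. The normal closure $\langle\langle \frak{u}_1\rangle\rangle_{H_1}$ contains $\frak{u}_1$ and its conjugate $w\m1\frak{u}_1 w=\frak{z}_1$, hence contains $\gp{\frak{u}_1,\frak{z}_1}=S$. Conversely, since $S$ is normal in $H_1$ and contains $\frak{u}_1$, we have $\langle\langle \frak{u}_1\rangle\rangle_{H_1}\subseteq S$. Therefore $\langle\langle \frak{u}_1\rangle\rangle_{H_1}=S$, i.e.\ the subgroup $S\cong C_{|a|}\star C_{|a|}$ of $U(\mathbb{K}G)$ is normally generated inside $H_1$ by the single element $\frak{u}_1$, which is exactly the required conclusion.

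I expect the only genuine subtlety to be the bookkeeping of the second step: making sure that the free product delivered abstractly by Theorem \ref{T:1} is literally the same subgroup as the normal subgroup implicit in the cyclic-extension statement of Theorem \ref{T:2}, and not merely an isomorphic copy sitting elsewhere in $H_1$. Once the identities $\frak{u}_1=wa$ and $\frak{z}_1=aw$ place $S$ inside $H_1$ and Theorem \ref{T:2} certifies its normality, the normal-closure computation is immediate and the remainder is routine.
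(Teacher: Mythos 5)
Your proposal is correct and follows essentially the same route as the paper: the paper's own proof simply sets $k=1$, notes $|\frak{u}_1|=|a|$ by Lemma \ref{L:2}, and cites Theorem \ref{T:1}(i). You go further by explicitly justifying the ``normally generated by a single element'' clause (via $\frak{u}_1=wa$, $\frak{z}_1=aw=w\m1\frak{u}_1w$, and the normality of $\gp{\frak{u}_1,\frak{z}_1}$ in $H_1$ coming from the relation $(\frak{u}_1,w,w)=1$ underlying Theorem \ref{T:2}), a point the paper leaves entirely implicit, so your write-up is if anything more complete than the original.
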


To prove our results we first recall the following  well known facts.

\smallskip

Let $\gp{a\mid a^n=1}$ be a cyclic group. Define  the  homomorphism
\[
\psi: \mathbb{K}[x] \to \mathbb{K}\gp{a}\cong \mathbb{K}[x]/\gp{x^n-1}.
\]
We often use the fact that for
any $w\in \mathbb{K}\gp{a}$ there exists a polynomial  $f(x)\in \mathbb{K}[x]$ of degree
$deg(f(x))<n$, such that $\psi(f(x))=w$.

\begin{lemma}\label{L:1} (\cite{Bovdi_book}, Proposition 2.7, p.9)
Let $H$ be  a subgroup of a group $G$ and let $K$ be a ring. The
left  annihilator  $L$
in $KG$ of the right  ideal
\[
\mathfrak{I}_r(H)=\gp{h-1\mid h\in H,\;  h\not=1}
\]
 is different  from zero if and only if $H$ is finite.
If $H$ is finite, then
\[
L=KG(\Sigma_{h\in H}h ).
\]
\end{lemma}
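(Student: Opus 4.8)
The plan is to reduce the annihilator condition to a transparent statement about the coefficients of an element of $KG$, after which both directions of the equivalence fall out with essentially no computation. First I would observe that since $\mathfrak{I}_r(H)$ is generated \emph{as a right ideal} by the elements $h-1$ with $h\in H\setminus\{1\}$, an element $x\in KG$ lies in $L$ if and only if $x(h-1)=0$ for every such $h$; equivalently $xh=x$ for all $h\in H$. The passage from the whole ideal to its generators is legitimate because $x(h-1)KG=0$ already forces $x(h-1)=0$ (take the identity coefficient), so it suffices to annihilate the generators themselves.

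Next I would translate this fixed-point condition into coefficients. Writing $x=\sum_{g\in G}\alpha_g g$, comparing the coefficient of a fixed group element on the two sides of $xh=x$ yields $\alpha_{gh}=\alpha_g$ for all $g\in G$ and all $h\in H$. In other words, the coefficient function $g\mapsto\alpha_g$ is constant on each coset $gH$. This single observation is the technical heart of the argument.

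From here the two cases separate cleanly. If $H$ is infinite, then every coset $gH$ is infinite; since $x$ has finite support, a single nonzero $\alpha_g$ would force infinitely many nonzero coefficients, a contradiction, so $x=0$ and $L=0$. If $H$ is finite, set $\sigma=\sum_{h\in H}h$. The inclusion $KG\,\sigma\subseteq L$ is immediate, since right multiplication by $h$ permutes $H$, whence $\sigma h=\sigma$ and $(g\sigma)h=g\sigma$ for every $g$. For the reverse inclusion I would choose coset representatives $g_1,g_2,\dots$ for the finitely many cosets meeting $\supp(x)$ and, using that $\alpha$ is constant on cosets, rewrite $x=\bigl(\sum_i\alpha_{g_i}g_i\bigr)\sigma$, exhibiting $x\in KG\,\sigma$. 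Combining the two inclusions gives $L=KG\,\sigma$.

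I do not anticipate a genuine obstacle: the proof is routine once the coefficient reformulation is in place. The only points requiring care are the bookkeeping of which side the coset is taken on and the direction of multiplication (an exponent-sign slip in the step $\alpha_{gh}=\alpha_g$ is the most likely error), and the recognition that it is precisely the finiteness of the support of elements of $KG$ that discriminates between the finite and infinite cases.
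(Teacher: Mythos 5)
Your proof is correct: reducing membership in the left annihilator to the fixed-point condition $xh=x$, translating this into constancy of the coefficient function on the cosets $gH$, and then splitting on whether $H$ is finite is exactly the standard argument for this fact. The paper itself does not prove the lemma but simply cites it (\cite{Bovdi_book}, Proposition 2.7), so there is no in-paper proof to diverge from; your argument is complete and the only point worth making explicit is that $\Sigma_{h\in H}h\neq 0$ (so $L\neq 0$) when $H$ is finite and $K$ is a nonzero ring, which your inclusion $KG\,\sigma\subseteq L$ already delivers.
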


\bigskip
Now we are able to prove the following  lemma.
\begin{lemma}\label{L:2}
Let $G$ be a group which has at least one non-normal
finite cyclic subgroup $\gp{a}$ and  let  $b\in G\setminus {\mathfrak N}_G(\gp{a})$.
Additionally let  $M$ be the smallest  integer  with the properties that
$2\leq M\leq |a|$ and $b\in {\mathfrak N}_G(\gp{a^{M}})$.

For an integer $k$  with  $1\leq  k\leq |a|$,
the element $\frak{u}_k=a^k+(a-1)b\widehat{a}$ is a unit in $\mathbb{K}G$ and the following hold:
\begin{itemize}
\item[(i)] if $(k,|a|)=1$, then $\frak{u}_k$ has  finite order $|a|$;
\item[(ii)] if $(k,|a|)\not=1$ then $\frak{u}_k$ has  finite order $\frac{|a|}{(k,|a|)}$ if   $(k,M)=1$ and $M\neq |a|$, otherwise it has  infinite order.
\end{itemize}
\end{lemma}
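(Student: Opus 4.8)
The plan is to compute the powers of $\mathfrak{u}_k$ explicitly and then read off the order from a support argument. Write $x=(a-1)b\widehat{a}$, so that $\mathfrak{u}_k=a^k+x$. First I would record three elementary identities: $\widehat{a}(a-1)=0$, and $\widehat{a}a^j=\widehat{a}$ for every integer $j$ (hence $xa^j=x$), and consequently $x^2=(a-1)b\,[\widehat{a}(a-1)]\,b\widehat{a}=0$. The unit claim is then immediate: factor $\mathfrak{u}_k=a^k(1+a^{-k}x)$ and observe $(a^{-k}x)^2=a^{-k}(xa^{-k})x=a^{-k}x^2=0$, so $1+a^{-k}x$ is invertible with inverse $1-a^{-k}x$, while $a^k$ is a unit.

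Next I would establish, by induction on $j$ using only $xa^k=x$ and $x^2=0$, the power formula
\[
\mathfrak{u}_k^{\,j}=a^{jk}+T_j\,x,\qquad T_j=\sum_{i=0}^{j-1}a^{ik}.
\]
The decisive observation is a support argument: since $b\not\in\gp{a}$ (because $b\not\in{\mathfrak N}_G(\gp{a})$), the double coset $\gp{a}b\gp{a}$ is disjoint from $\gp{a}$, so the group element $a^{jk}\in\gp{a}$ and the remainder, with $\supp(T_jx)\subseteq\gp{a}b\gp{a}$, never overlap. Hence $\mathfrak{u}_k^{\,j}=1$ if and only if \emph{both} $a^{jk}=1$ and $T_jx=0$.

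Now $a^{jk}=1$ forces $\frac{|a|}{(k,|a|)}\mid j$; writing $d=(k,|a|)$ and $j_0=\frac{|a|}{d}$, a short coprimality computation identifies $T_{j_0}=\widehat{a^{d}}$ and, by periodicity, $T_{mj_0}=m\,\widehat{a^{d}}$. Thus everything reduces to deciding when $\widehat{a^{d}}x=\widehat{a^{d}}(a-1)b\widehat{a}=0$. For $d=1$ this reads $\widehat{a}(a-1)b\widehat{a}=0$, which holds by the first identity; combined with the fact that no $j<|a|$ satisfies $a^{jk}=1$, this gives order exactly $|a|$ and proves (i). For $d\ne1$ I would use that $y\widehat{a}=0$ precisely when the coefficient sums of $y$ over every left coset of $\gp{a}$ vanish (the left annihilator of $\widehat{a}$, in the spirit of Lemma~\ref{L:1}). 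Two support elements $a^pb,a^{p'}b$ lie in one left coset exactly when $e\mid(p-p')$, where $\gp{a^{e}}=\gp{a}\cap b\gp{a}b\m1$, and a Chinese-Remainder count then shows all these sums vanish if and only if $(d,e)=1$.

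The main obstacle is the clean identification $e=M$. I would prove it by noting that conjugation identifies the two intersections $\gp{a}\cap b\gp{a}b\m1$ and $\gp{a}\cap b\m1\gp{a}b$, which therefore coincide as equal-order subgroups of a cyclic group; hence $b$ normalizes $\gp{a^{e}}$, giving $M\le e$, while $b\in{\mathfrak N}_G(\gp{a^{M}})$ yields $\gp{a^{M}}\subseteq\gp{a}\cap b\gp{a}b\m1=\gp{a^{e}}$, so $e\mid M$; since $b\not\in{\mathfrak N}_G(\gp{a})$ rules out $e=1$, these force $e=M$. Finally $(d,M)=(k,M)$ because $M\mid|a|$, so $\widehat{a^{d}}x=0\iff(k,M)=1$, in which case the order is $j_0=\frac{|a|}{(k,|a|)}$; and when $M=|a|$ one automatically has $(k,M)=(k,|a|)=d\ne1$, recovering the stated exception. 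If instead $(k,M)\ne1$, then $\widehat{a^{d}}x\ne0$, and over a domain of characteristic zero $m\,\widehat{a^{d}}x\ne0$ for all $m\ge1$, so $\mathfrak{u}_k$ has infinite order. The subtle point to watch is precisely this last non-vanishing, which is where the characteristic hypothesis enters.
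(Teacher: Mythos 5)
Your proof is correct (over a domain of characteristic $0$) and reaches the same intermediate waypoints as the paper --- the power formula $\mathfrak{u}_k^{\,j}=a^{jk}+\bigl(\sum_{i=0}^{j-1}a^{ik}\bigr)(a-1)b\widehat{a}$ and the reduction to the $s_k$-th power with $s_k=|a|/(k,|a|)$ --- but it finishes differently at the two decisive points. First, where the paper handles the vanishing of $\bigl(\sum_i a^{ik}\bigr)(a-1)b\widehat{a}$ by noting that $b\in{\mathfrak N}_G(\gp{a^{M}})$ forces $(a^{M}-1)b\widehat{a}=0$ and then counting exponents modulo $M$ as a ``cyclic sequence,'' you identify the relevant period structurally, proving $\gp{a}\cap b\gp{a}b\m1=\gp{a^{M}}$ and running a left-coset-sum annihilator criterion for $\widehat{a^{d}}(a-1)b\widehat{a}=0$; this makes explicit a fact the paper uses only implicitly (that the minimality of $M$ is what guarantees non-vanishing when $(k,M)\neq 1$), and is a worthwhile clarification. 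Second, for the infinite-order conclusion the paper invokes the generalized Berman--Higman theorem (a nontrivial torsion normalized unit cannot have augmentation-$1$ coefficient at the identity), whereas you compute \emph{all} powers directly, $\mathfrak{u}_k^{\,mj_0}=1+m\,\widehat{a^{d}}x$, and conclude from additive torsion-freeness of $\mathbb{K}G$. Your route is more elementary and self-contained, at the visible cost of the characteristic-zero hypothesis --- but that hypothesis is genuinely needed for the statement to hold (in characteristic $p$ one gets $\mathfrak{u}_k^{\,pj_0}=1$), and it is also hidden inside the paper's appeal to Berman--Higman, so flagging it is a feature of your argument rather than a defect. The support observations you rely on (disjointness of $\gp{a}$ from $\gp{a}b\gp{a}$, and the coset criterion for right multiplication by $\widehat{a}$) are sound, and the identification $e=M$ via the equality of $\gp{a}\cap b\gp{a}b\m1$ with $\gp{a}\cap b\m1\gp{a}b$ inside the cyclic group $\gp{a}$ is correct.
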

\begin{proof}
Clearly $1\not=w=1+(a-1)b\widehat{a}\in V(\mathbb{K}G)$ and the element
\[
\frak{u}_k=a^k+(a-1)b\widehat{a}=\big(1+(a-1)b\widehat{a}\big)a^k, \qquad (1\leq  k\leq  |a|)
\]
is also a unit of $\mathbb{K}G$, because  $\frak{u}_k\m1=a^{-k}-a^{-k}(a-1)b\widehat{a}$. Moreover
\begin{equation}\label{E:1}
\frak{u}_k^i=a^{ik}+\Big(\sum_{j=0}^{i-1}a^{jk}\Big)(a-1)b\widehat{a}.
\end{equation}
(i) First let  $(k,|a|)=1$.  By Lemma \ref{L:1} we have
\[
\Big(\sum_{j=0}^{|a|-1}a^{jk}\Big)(a-1)=\Big(\sum_{j=0}^{|a|-1}a^{j}\Big)(a-1)=0
\]
so by (\ref{E:1}) the orders of the elements $\frak{u}_k$ and $a$ coincide.


(ii) In the sequel we  assume that  $(k,|a|)\not=1$. Define the following  integers:\quad
$s_k=\frac{|a|}{(k,|a|)}$\quad  and\quad  $t_k=\frac{s_k}{\frac{M}{(M,k)}}=\frac{|a|\cdot (k,M)}{M\cdot (k,|a|)}$ .

First, let $b\not\in {\mathfrak N}_G(\gp{a^M})$   for any $2\leq M\leq |a|-1$. Clearly
\[
\textstyle
\Big(\sum_{j=0}^{{s_k}-1}a^{jk}\Big)(a-1)\not=0,
\]
by Lemma \ref{L:1}.
Hence $\frak{u}_k$ has infinite order, because from (\ref{E:1}) and from the gene\-ralized Berman-Higman's theorem (see \cite{Artamonov_Bovdi}, {\bf 1.2}, p.5 or \cite{Bovdi_trace}) it  follows that
\[
\textstyle
tr\big(\frak{u}_k^{s_k}\big)=tr\Big(1+\big(\sum_{j=0}^{{s_k}-1}a^{jk}\big)(a-1)b\widehat{a}\Big)=1\not=0.
\]
Let  $M$ be the smallest  integer with the properties that  $2\leq M\leq |a|$
 and $b\in {\mathfrak N}_G(\gp{a^{M}})$.
Since $(k,m)=1$, the set $\{\; ik\; \mid\;  0\leq i\leq s_k-1\; \}$ is a cyclic sequence $mod\; M$, so
\[
\{\; ik\mid 0\leq i\leq s_k-1\; \}\pmod{M} = \{\; j\mid 0\leq j\leq M-1\; \}
\]
and\quad $\frac{|\{\; ik\;  \mid \;  0\leq i\leq s_k-1\; \}|}{|\{\; j\; \mid \; 0\leq j\leq M-1\; \}|}=t_k$. It follows that
\[
\begin{split}
(\frak{u}_k)^{s_k}&=a^{ks_k}+\textstyle\Big(\sum_{i=0}^{s_{k}-1}a^{ik}\Big)(a-1)b\widehat{a}\\
&=1+\textstyle t_k\Big(\sum_{j=0}^{M-1}a^j\Big)(a-1)b\widehat{a}\\
&=1+\textstyle t_k(a^{M}-1)b\widehat{a}=1,
\end{split}
\]
so the order of  $\frak{u}_k$ is equal  to $s_k=\frac{|a|}{(k,|a|)}$.

Now let $(k,M)=k_M\not=1$.
Clearly  the set $\{\; ik\; \mid\;  0\leq i<s_k\; \}$ is a cyclic sequence $mod\; M$, so
\[
\textstyle
\{\; ik\mid 0\leq i\leq s_k-1\; \}\pmod{M} = \{\; jk_M\mid 0\leq j\leq \frac{M}{(M,k)}-1\; \}
\]
and \quad $\textstyle
\frac{|\{\; ik\; \mid \; 0\leq i\leq s_k-1\; \}|}{|\{\; j\; \mid \; 0\leq j\leq \frac{M}{(M,k)}-1\; \}|}=t_k$.\quad
It follows that
\[
\begin{split}
(\frak{u}_k)^{s_k}&=a^{ks_k}+\textstyle\Big(\sum_{i=0}^{s_{k}-1}a^{ik}\Big)(a-1)b\widehat{a}\\
&=1+\textstyle t_k\Big(\sum_{j=0}^{\frac{M}{(M,k)}-1}a^{j(k,M)}\Big)(a-1)b\widehat{a}\not=1,\\
\end{split}
\]
so $\frak{u}_k$ has infinite order, because again from the gene\-ralized Berman-Higman's theorem (see \cite{Artamonov_Bovdi}, {\bf 1.2}, p.5 or \cite{Bovdi_trace})  we have
$tr\big(\frak{u}_k^{s_k}\big)=1\not=0$. \end{proof}

\begin{lemma}\label{L:3}
Let $G=\gp{a}$ be a finite cyclic  group  and  let $1\leq  k< |a|$.
If\quad   $\Delta_k(x)=\sum_{i=0}^{k-1}x^i$,\;  $\Delta_{-k}(x)=\sum_{i=0}^{|a|-k-1}x^i\in \mathbb{Z}[x]$\; and $\widehat{x}=\sum_{i=0}^{|a|-1}x^i\in \mathbb{Z}[x]$,  then   for any integers  $j,l,s \geq 0$ and  $\beta\in\mathbb{Z}$  the following conditions in $\mathbb{K}G$ hold:
\begin{equation}\label{E:2}
0\not= \Delta_{\mp k}^{j}(a) \Delta_{\pm k}^{l}(a) (a-1)^s\not=\beta\widehat{a}.
\end{equation}
\end{lemma}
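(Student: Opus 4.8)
The plan is to convert the two inequalities into a single non-vanishing statement in a quotient of $\mathbb{K}G$, and then to certify that non-vanishing by evaluating at a root of unity. Write $X=\Delta_{\mp k}^{j}(a)\,\Delta_{\pm k}^{l}(a)\,(a-1)^{s}$. Because $g\widehat{a}=\widehat{a}$ for every $g\in G$, the ideal generated by $\widehat{a}$ equals the one--dimensional space $\mathbb{K}\widehat{a}$, so $X$ is a scalar multiple $\gamma\widehat{a}$ with $\gamma\in\mathbb{K}$ precisely when $X\in\mathbb{K}\widehat{a}$. As $X=P(a)$ with $P(x)=\Delta_{\mp k}^{j}(x)\Delta_{\pm k}^{l}(x)(x-1)^{s}\in\mathbb{Z}[x]$, every coefficient of $X$ lies in the prime subring of $\mathbb{K}$; hence such a $\gamma$ is forced to be an integer multiple of $1_{\mathbb{K}}$, i.e. $\gamma=\beta 1_{\mathbb{K}}$ for some $\beta\in\mathbb{Z}$. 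Thus the two required conditions $X\neq 0$ and $X\neq\beta\widehat{a}$ for all $\beta\in\mathbb{Z}$ are together equivalent to $X\notin\mathbb{K}\widehat{a}$, that is, to the non-vanishing of the image of $X$ in $\mathbb{K}G/\mathbb{K}\widehat{a}$. Through $\psi$ this quotient is $\mathbb{K}[x]/(\widehat{x})$ (note $x^{|a|}-1=(x-1)\widehat{x}$), so it suffices to show that $\widehat{x}\nmid P(x)$ in $\mathbb{K}[x]$.

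Next I would establish this non-divisibility by evaluation. The telescoping identities $\Delta_{k}(x)(x-1)=x^{k}-1$ and $\Delta_{-k}(x)(x-1)=x^{\,|a|-k}-1$ show that for any $\zeta$ with $\zeta^{|a|}=1$ and $\zeta\neq 1$ one has $\Delta_{k}(\zeta)=(\zeta^{k}-1)/(\zeta-1)$ and $\Delta_{-k}(\zeta)=(\zeta^{-k}-1)/(\zeta-1)$. Consequently $P(\zeta)$ is a product of powers of these two quantities together with $(\zeta-1)^{s}$; the last factor is nonzero since $\zeta\neq 1$, while each $\Delta$-factor vanishes only when $\zeta^{k}=1$. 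When $\operatorname{char}\mathbb{K}\nmid |a|$ the polynomial $\widehat{x}$ is separable and its roots are exactly the nontrivial $|a|$-th roots of unity, so $\widehat{x}\nmid P$ the moment $P(\zeta)\neq 0$ for a single such $\zeta$. Taking $\zeta$ to be a primitive $|a|$-th root of unity and using $1\le k<|a|$ gives $\zeta^{k}\neq 1$, hence $P(\zeta)\neq 0$; the case $j=l=0$ is immediate, since then $P(\zeta)=(\zeta-1)^{s}\neq 0$. Both sign patterns $\Delta_{\mp k}^{j}\Delta_{\pm k}^{l}$ are covered at once, because changing the signs only interchanges $\zeta^{k}$ and $\zeta^{-k}$, which vanish together.

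The step I expect to be the main obstacle is the modular case $\operatorname{char}\mathbb{K}=p$ with $p\mid |a|$. Here $x^{|a|}-1$ is inseparable, $\widehat{x}$ has repeated irreducible factors, and the vanishing of $P$ at a single root no longer detects divisibility; indeed the conclusion can genuinely fail in this regime (for example $p=|a|=2$, $j=l=0$, $s=1$ yields $x-1=\widehat{x}$). To deal with it I would write $|a|=p^{e}m$ with $p\nmid m$, factor $\widehat{x}$ through $x^{m}-1$, and compare, for each $d\mid |a|$ with $d>1$, the multiplicity of the corresponding cyclotomic factor in $P$---read off from the factorizations $\Delta_{k}(x)=\prod_{d\mid k,\,d>1}\Phi_{d}(x)$ and $\Delta_{-k}(x)=\prod_{d\mid(|a|-k),\,d>1}\Phi_{d}(x)$ together with the power of $(x-1)$ in each factor---against its multiplicity in $\widehat{x}$. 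Controlling these multiplicities is the delicate point; in characteristic $0$, and in particular for $\mathbb{K}=\mathbb{Z}$ as used in the applications, the separable evaluation above already yields the full statement, so one may alternatively impose the hypothesis $\operatorname{char}\mathbb{K}\nmid |a|$.
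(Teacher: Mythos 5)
Your argument follows the same core strategy as the paper's: identify $\mathbb{K}\gp{a}$ with $\mathbb{K}[x]/(x^{|a|}-1)$ and certify that the product is not a multiple of $\widehat{a}$ by evaluating at a primitive $|a|$-th root of unity $\zeta$, where each factor $\Delta_{\pm k}(\zeta)=(\zeta^{\pm k}-1)/(\zeta-1)$ is visibly nonzero because $1\leq k<|a|$. You improve on the paper in two respects. First, your reduction of both inequalities to the single statement $X\notin\mathbb{K}\widehat{a}$ (using that $0=0\cdot\widehat{a}$ and that integrality of the coefficients of $X$ forces any scalar $\gamma$ with $X=\gamma\widehat{a}$ into the prime subring) is cleaner than the paper's, which handles $X\neq\beta\widehat{a}$ by the evaluation argument and then separately invokes Lemma~\ref{L:1} to get $X\neq 0$; your observation that the case $\beta=0$ already gives nonvanishing makes that second step superfluous. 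Second, and more importantly, you have put your finger on a genuine defect: the paper evaluates at a \emph{complex} root of unity, which is only legitimate when the putative identity in $\mathbb{K}G$ can be lifted to characteristic $0$, and your counterexample is valid --- in $\mathbb{F}_2C_2$ one has $a-1=a+1=\widehat{a}$ (so $j=l=0$, $s=1$, $\beta=1$ violates the second inequality) and $(a-1)^2=2-2a=0$ (so $s=2$ violates the first). Hence the lemma as stated, for an arbitrary integral domain $\mathbb{K}$, is false whenever $\operatorname{char}\mathbb{K}$ divides $|a|$; your separable-case argument, evaluating at a root of unity in an extension of the fraction field of $\mathbb{K}$ and using that $\widehat{x}$ is then squarefree with exactly the nontrivial $|a|$-th roots of unity as roots, proves the lemma under the necessary additional hypothesis $\operatorname{char}\mathbb{K}\nmid|a|$. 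This omission is the paper's, not yours, and it propagates to Lemma~\ref{L:4} and to Theorems~\ref{T:1} and~\ref{T:2}, which are likewise stated for arbitrary integral domains. In short: your proof is correct, by essentially the paper's method, but carried out with the care about the characteristic that the paper's own proof lacks.
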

\begin{proof}
Clearly $\mathbb{K}\gp{a}\cong \mathbb{K}[x]/\gp{x^n-1}$, where   $n=|a|$. Let   $\xi\in\mathbb{C}$ be a primitive root of unity of order $n$.
Obviously  $deg\big(\Delta_{\pm k}(x)\big)\leq n-2$.

First of all $\Delta_{\pm k}(\xi)\not=0$. Indeed, if $\Delta_{\pm k}(\xi)=0$ then
\[
0=\Delta_{\pm k}(\xi)(\xi-1)=\xi^m-1,\qquad\quad   (m\in \{|a|-k, k\})
\]
a contradiction  because $m<n$ and $\xi-1\not=0$.

Now, if \;  $\Delta_{\mp k}^{j}(x)\Delta_{\pm k}^{l}(x)(x-1)^{s}=\beta\widehat{x}$\quad  for some integers $j,l,s\geq 0$ and $\beta\in \mathbb{Z}$, then
\;  $\Delta_{\mp k}^{j}(\xi) \cdot\Delta_{\pm k}^{l}(\xi)\cdot (\xi-1)^{s}=\beta\widehat{\xi}=0\in\mathbb{C}$,\quad
so $\Delta_{\pm k}(\xi)=0$, a contradiction. Consequently,
\[
\Delta_{\mp k}^{j}(x)\Delta_{\pm k}^{l}(x)(x-1)^{s}\not=\beta\widehat{x}
\]
for any integers $j,l, s \geq 0$ and  $\beta\in\mathbb{Z}$.

Now the rest of  (\ref{E:2}) follows  trivially from  Lemma \ref{L:1}, because
\[
Ann_l\big(\mathfrak{I}_r(\gp{a})\big)\ni \beta\widehat{a}\not=\Delta_{\mp k}^{j}(a)\Delta_{\pm k}^{l}(a)(a-1)^{s-1}.
\]\end{proof}

\begin{lemma}\label{L:4}
Let $G$ be a group which has at least one non-normal
finite cyclic subgroup $\gp{a}$ and let   $b\in G\setminus {\mathfrak N}_G(\gp{a})$. Let  $1\leq  k < |a|$,  such that
$b\not\in {\mathfrak N}_G(\gp{a^k})$.

Set     $\Delta_k(x)=\sum_{i=0}^{k-1}x^i\in \mathbb{Z}[x]$ and $\Delta_{-k}(x)=\sum_{i=0}^{|a|-k-1}x^i\in \mathbb{Z}[x]$.
Define the following elements of $\mathbb{K}G$:
\[
\begin{aligned}
x_{+}&=(a-1)\big(\Delta_k(a)+b\widehat{a}\big),&\qquad y_{+}&=(a-1)\big(\Delta_k(a)+a^k b\widehat{a}\big), \\
x_{-}&=(a-1)\big(\Delta_{-k}(a)-a^{-k} b\widehat{a}\big),&\qquad y_{-}&=(a-1)\big(\Delta_{-k}(a)- b\widehat{a}\big).
\end{aligned}
\]
If $z_\alpha, z_\beta\in \{x_{\pm}, y_{\pm}\}$ then $z_{\alpha}z_{\beta}=\Delta_{\alpha k}(a)(a-1)z_{\beta}\not=0$, where  $\alpha\in\{\pm \}$.
\end{lemma}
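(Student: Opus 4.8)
The plan is to separate the claim into a purely algebraic product identity, which is a one-line manipulation, and the nonvanishing assertion, which carries the real content and will be settled by a support argument together with Lemma \ref{L:3}.

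First I would record the two facts that make the computation collapse. Since $\widehat{a}=\sum_{i=0}^{|a|-1}a^{i}$ satisfies $a\widehat{a}=\widehat{a}$, we have $\widehat{a}(a-1)=0$ in $\mathbb{K}G$; and since $\mathbb{K}\gp{a}$ is commutative, $(a-1)$ commutes with each $\Delta_{\pm k}(a)$. Every one of the four elements has the shape $z_\gamma=(a-1)\big(\Delta_{\gamma k}(a)+\varepsilon_\gamma c_\gamma\,b\widehat{a}\big)$ with $\gamma\in\{+,-\}$, $\varepsilon_\gamma=\pm1$, and $c_\gamma\in\{1,a^{k},a^{-k}\}$ a power of $a$.

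For the identity, I would write $z_\beta=(a-1)P_\beta$ with $P_\beta=\Delta_{\beta k}(a)+\varepsilon_\beta c_\beta b\widehat{a}$ and expand
\[
z_\alpha z_\beta=(a-1)\Delta_{\alpha k}(a)\,z_\beta+\varepsilon_\alpha c_\alpha(a-1)b\,\widehat{a}(a-1)P_\beta.
\]
The second summand is $0$ because $\widehat{a}(a-1)=0$, so the $b\widehat{a}$ part of the \emph{left} factor is killed no matter which $z_\beta$ is chosen; commuting $(a-1)$ past $\Delta_{\alpha k}(a)$ in the first summand yields exactly $z_\alpha z_\beta=\Delta_{\alpha k}(a)(a-1)z_\beta$, the asserted formula.

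The substantive step is nonvanishing. Replacing $z_\beta$ by $(a-1)P_\beta$ gives
\[
z_\alpha z_\beta=\underbrace{\Delta_{\alpha k}(a)(a-1)^2\Delta_{\beta k}(a)}_{=:T_1}+\underbrace{\varepsilon_\beta\,\Delta_{\alpha k}(a)(a-1)^2c_\beta\,b\widehat{a}}_{=:T_2}.
\]
Now $T_1\in\mathbb{K}\gp{a}$, whereas $T_2$ is a $\mathbb{K}\gp{a}$ element times $b\widehat{a}$, so $\supp(T_2)\subseteq\gp{a}\,b\,\gp{a}$. Because $b\notin{\mathfrak N}_G(\gp{a})\supseteq\gp{a}$, no element $a^{m}ba^{i}$ can lie in $\gp{a}$ (otherwise $b\in\gp{a}$), whence $\supp(T_1)$ and $\supp(T_2)$ are disjoint. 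By Lemma \ref{L:3}, applied with $s=2$ and the powers of $\Delta_{\pm k}$ chosen to match the signs $\alpha,\beta$ (the four products $\Delta_k^{2}$, $\Delta_k\Delta_{-k}$, $\Delta_{-k}\Delta_k$, $\Delta_{-k}^{2}$ being instances of $\Delta_{\mp k}^{j}\Delta_{\pm k}^{l}$ with $j+l=2$), we get $T_1\neq0$. Disjointness of supports then forces $z_\alpha z_\beta=T_1+T_2\neq0$.

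The one place that needs care---the main obstacle---is this nonvanishing. The temptation is to analyze possible cancellations inside $T_1+T_2$ directly; the clean route is instead to observe that $T_1$ and $T_2$ occupy disjoint components of $\mathbb{K}G$ (the subring $\mathbb{K}\gp{a}$ versus the $\mathbb{K}$-span of $\gp{a}b\gp{a}$), which reduces the entire matter to the single input $T_1\neq0$ furnished by Lemma \ref{L:3}.
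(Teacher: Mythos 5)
Your proof is correct and follows essentially the same route as the paper: the identity $z_\alpha z_\beta=\Delta_{\alpha k}(a)(a-1)z_\beta$ via $\widehat{a}(a-1)=0$, and nonvanishing by combining Lemma \ref{L:3} (for the $\mathbb{K}\gp{a}$-component $\Delta_{\alpha k}(a)\Delta_{\beta k}(a)(a-1)^2\neq 0$) with the disjointness of supports between $\gp{a}$ and $\gp{a}b\gp{a}$. The paper phrases the last step as a contradiction, but the content is identical.
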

\begin{proof} Since $b\not\in {\mathfrak N}_G(\gp{a})$, we have $x_{\alpha}\not=0$ and $y_{\alpha}\not=0$. Now
\[
x_{\alpha}(a-1)=\Delta_{\alpha k}(a)(a-1)^2,\quad
y_{\alpha}(a-1)=\Delta_{\alpha k}(a)(a-1)^2,
\]
so it can be  easily   verified that  $z_{\alpha}z_{\beta}=\Delta_{\alpha k}(a)(a-1)z_{\beta}$.

If $\Delta_{\alpha k}(a)(a-1)z_{\beta}=0$, then       $\Delta_{\alpha k}(a)\Delta_{\pm k}(a)(a-1)^2\not=0$  by  (\ref{E:2}), so
\[
\Delta_{\alpha k}(a)\Delta_{\pm k}(a)(a-1)^2\not=\pm \Delta_{\alpha k}(a)(a-1)^2a^{\pm ks} b\widehat{a}, \quad (s\in\{0,1\})
\]
because the supports of these elements are different, a contradiction. Hence
$z_{\alpha}z_{\beta}=\Delta_{\alpha k}(a)(a-1)z_{\beta}\not=0$, where
$z_\alpha, z_\beta\in \{x_{\pm}, y_{\pm}\}$. \end{proof}

\bigskip

\begin{proof}[Proof Theorem 1]
Let  $1\leq  k < |a|$ with the property  that
$b\not\in {\mathfrak N}_G(\gp{a^k})$. Set  $\omega=|a|-k$,    $\Delta_k(x)=\sum_{i=0}^{k-1}x^i\in \mathbb{Z}[x]$\quad  and $\Delta_{-k}(x)=\sum_{i=0}^{\omega-1}x^i\in \mathbb{Z}[x]$. It is easy to see that
\[
\begin{split}
\frak{u}_k=a^k+(a-1)b\widehat{a}&=1+(a-1)\Big(\Delta_k(a)+b\widehat{a}\Big);\\
\frak{z}_k=w\m1\frak{u}_k w&=1+(a-1)\Big(\Delta_k(a)+a^kb\widehat{a}\Big); \\
\frak{u}_k^{-1}=a^{\omega}-a^{\omega}(a-1)b\widehat{a}&=1+(a-1)\Big(\Delta_{-k}(a)-a^{-k}b\widehat{a}\Big);\\
\frak{z}_k^{-1}=a^{\omega}-(a-1)b\widehat{a}&=1+(a-1)\Big(\Delta_{-k}(a)-b\widehat{a}\Big).\\
\end{split}
\]
Here   $\frak{u}_k^{\pm 1}=1+x_{\pm}$, $\frak{z}_k^{\pm 1}=1+y_{\pm}$ and
$x_{\pm}, y_{\pm}$ are from    Lemma \ref{L:4}.

\smallskip

Let  $1\leq k < |a|$ and let $m \in \mathbb{N}$, such that  $1\leq m<|a|$. Define
\begin{equation}\label{E:3}
\textstyle
F_{m,k}(x)=\sum_{i=1}^{m}{m\choose i}\Delta_{k}(x)^{i-1}(x-1)^{i-1}\in \mathbb{Z}[x].
\end{equation}
Let $\tau k$ (this is a symbol, not a product) denote a natural number from
$\{ k, |a|-k\}$. Obviously   $F_{m,\tau k}(x)\not=0$ and  from (\ref{E:3}) we get
\begin{equation}\label{E:4}
\begin{split}
1+F_{m, \tau k}(x)\Delta_{\tau k}(x)(x-1)&=\Big(1+\Delta_{\tau k}(x)(x-1)\Big)^{m}\\
&=(1+x^{\tau k}-1)^{m}\\
&=x^{{(\tau k)}m}.
\end{split}
\end{equation}
If  $m \in \mathbb{Z}$  and $\alpha, \gamma\in \{ \pm \}$, then  using induction
on $|m|\geq 2$,  from Lemma \ref{L:4},    (\ref{E:3}) and from the fact
${m \choose i} +{m \choose {i-1}}={{m+1} \choose i}$ we have
\begin{equation}\label{E:5}
(1+x_\alpha)^{m}=1+F_{|m|, \tau k}(a)x_\gamma,\quad  (1+y_\alpha)^{m}=1+F_{|m|, \tau k}(a)y_\gamma,
\end{equation}
where the symbol $\tau k=k$  for  $m\geq 0$ and otherwise $\tau k=|a|- k$. Moreover  the symbol $\gamma=\alpha$  for  $m\geq 0$ and otherwise $\gamma$ is the opposite sign of $\alpha$. This yields that  if
$l_i\in \{x_{+}, y_{+}\}$ and $\alpha_i\in \mathbb{Z}$  then
\begin{equation}\label{E:6}
\textstyle
(1+l_1)^{\alpha_1}\times \cdots \times (1+l_s)^{\alpha_s}=\prod_{i=1}^s(1+z_i)^{\beta_i}
\end{equation}
where $z_i\in \{x_{\pm}, y_{\pm}\}$ and $\beta_i=|\alpha_i|$ for all $i=1,\ldots, s$.

Any word   from the set  $\{\frak{u}_k, \frak{z}_k\}$ can  be reduced to the following form  $P_s=t_1^{\alpha_1}\cdots t_s^{\alpha_s}$, where
$t_i=1+l_i\in\{\frak{u}_k, \frak{z}_k\}$, $l_i\in \{x_{+}, y_{+}\}$, $t_i\not=t_{i+1}$ and $s>0$. Moreover, if $\frak{u}_k$ has infinite order, then   $\alpha_i\in \mathbb{Z}$, otherwise  $|\alpha_i|<|a|$.

If $|\alpha_i|\geq |a|$ for some $i$, then there exist integers $l> 0$ and $\alpha_i'$, such that $|\alpha_i|=l(|a|-1) +\alpha_i'$ and $0\leq \alpha_i'<|a|-1$. Put $\varepsilon=sign(\alpha_i)$.  Then  we write
$t_i^{\alpha_i}=t_i^{\varepsilon |\alpha_i|}=
\underbrace{ t_i^{\varepsilon(|a|-1)}\cdots t_i^{\varepsilon(|a|-1)}}_lt_i^{\varepsilon \alpha_i'}$.
Furthermore if
$(|\alpha_i|,|a|)\not=1$, then we write \quad $t_i^{\alpha_i}=t_i^{\varepsilon (|a|-1)}t_i^{\varepsilon 1}$.

Applying these reductions,  we can assume that   for a fixed $s$, the word  $P_s$ can be written as  $P_s=t_1^{\alpha_1}\cdots t_s^{\alpha_s}$ (with a new value of $s$), where $t_i\in \{1+x_+, 1+y_+\}$,  $1\leq |\alpha_i|<|a|$ and
$(|\alpha_i|,|a|)=1$  for each $1\leq i\leq s$. Then
\[
\begin{aligned}
\textstyle
P_s=\prod_{i=1}^s t_i^{\alpha_i}=\prod_{i=1}^s(1+l_i)^{\alpha_i}&=\prod_{i=1}^s(1+z_i)^{\beta_i}& \text{by  }(\ref{E:6})\\
&\textstyle
=\prod_{i=1}^s(1+f_iz_i),\quad   &\text{by  }(\ref{E:5})\\
\end{aligned}
\]
where $\beta_i=|\alpha_i|$, $z_i\in \{x_{\pm}, y_{\pm}\}$ and   $f_i=F_{\beta_i,\tau_i k}(a)$. Note that
the symbol $\tau_i k=k$ if $\alpha_i\geq 0$ and $\tau_i k=|a|-k$, otherwise. Denote $\Delta_{\tau_i k}(a)$ by $\Delta_i$.

Put $T_j(a)=f_j-\beta_j=F_{\beta_j,\tau_j k}(a)-\beta_j$. Similarly as  in Lemma \ref{L:4},
$z_iT_j(a)=(a-1)\Delta_iT_j(a)$, so using this fact and  Lemma \ref{L:4}, we get
\begin{equation}\label{E:7}
\begin{split}
z_i f_jz_j&=z_i(\beta_j+T_j(a))z_j=\beta_jz_iz_j+(a-1)\Delta_iT_j(a)z_j\\
&\textstyle=\beta_j(a-1)\Delta_iz_j+(a-1)\Delta_iT_j(a)z_j\\
&\textstyle=\Delta_i(a-1)\cdot f_jz_j.\\
\end{split}
\end{equation}
Now using (\ref{E:7})  and  Lemma \ref{L:4} we obtain that
\begin{equation}\label{E:8}
\begin{aligned}
P_s&\textstyle
=1+f_1z_1+\sum_{i=2}^{s}\Big(\prod_{j=1}^{i-1}\big(1+(a-1)\Delta_jf_j\big)\Big)f_iz_i&\\
&\textstyle
=1+f_1z_1+\sum_{i=2}^{s}\big( a^{\pi_i} \big)f_iz_i&\text{by }(\ref{E:4})\\
&\textstyle
=1+\sum_{i=1}^{s}a^{\pi_i}f_iz_i,&\\
\end{aligned}
\end{equation}
where $0\leq \pi_i<|a|$ and put $\pi_1=0$. In the sequel the exact value of $\pi_i$ (except that  $\pi_1=0$) is not important for us.

Since  $z_i\in \{x_{\pm}, y_{\pm}\}$, from the last equation   we obtain that
\begin{equation}\label{E:9}
P_s=1+ G_1(a)x_{-}+G_2(a)x_{+}+ G_3(a)y_{-}+G_4(a)y_{+},
\end{equation}
where $G_j(a)=\sum_{l\in I_j}a^{\pi_l} f_l$,  $(j=1,\ldots,4)$ and  $I_1, I_2,I_3, I_4$ form a pairwise
distinct partition of the set $\{1, \ldots,s\}$.
Clearly at least one (say $x_{-}$)  of the elements from $\{x_{\pm}, y_{\pm}\}$ has to appear on the right side.

Let us prove that $G_1(a)\not=0$. Clearly $f_l\not=0$ for all $l\in I_1$, because
\[
f_l\cdot \Delta_{-k}(a)(a-1)=\big(a^{\beta_l}\big)^{-k}-1\not=0
\]
by (\ref{E:4}) as $(\beta_l, |a|)=1$ and $|a|-k<|a|$.

In the sequel  for  the polynomial $F_{m,|a|-k}(x)$ from  (\ref{E:3})
we allow that $m\leq |a|$ and
the condition $(m,|a|)=1$ is not required.
For each  $a^{\pi_l}$ there is a  $\beta_j=\beta_j(l)$ such that $a^{\pi_l}=1+\Delta_{-k}(a-1)f_j$ (see (\ref{E:4})).

Note that each $1<\beta_i< |a|$ (see the definition of $P_s$). Now put $\beta_n=|a|$, where $n=s+1$.
Moreover if $\beta_n\leq \beta_i+\beta_j< 2\beta_n$, then $\beta_i+\beta_j=\beta_n+\beta_k$ for some $1\leq \beta_k<\beta_n$
(of course the condition  $(\beta_k,|a|)=1$  is not required) and, consequently,  (see (\ref{E:5})) the following equation holds
\begin{equation}\label{E:10}
(1+f_ix_{-})(1+f_jx_{-})=(1+f_kx_{-})(1+f_nx_{-}),
\end{equation}
where $f_i=F_{\beta_i,-k}(a)$. Then
\[
\begin{aligned}
a^{\pi_l}f_ix_{-}&=\big(1+f_j\Delta_{-k}(a-1)\big)f_ix_{-}=(f_i+f_ix_{-}f_j)x_{-}&\text{by (\ref{E:7})}\\
&=(1+f_ix_{-})(1+f_jx_{-})-f_jx_{-}-1&\\
&=(1+f_kx_{-})(1+f_nx_{-})-f_jx_{-}-1&\text{by (\ref{E:10})}\\
&=f_k\big(1+\Delta_{-k}(a-1)f_n\big)x_{-}+f_nx_{-}-f_jx_{-}&\text{by (\ref{E:7})}\\
&=f_k\big((a^{|a|})^{-k}-1\big)x_{-}+f_nx_{-}-f_jx_{-}&\text{by (\ref{E:4})}\\
&=(f_n-f_j)x_{-}.&\\
\end{aligned}
\]
Now using the last equation, (\ref{E:4}) and the well known equation
\[
x^{i+j}-1=(x^{i}-1)(x^{j}-1)+(x^{i}-1)+(x^{j}-1)
\]
it is easy to see  that
\begin{equation}\label{E:11}
\begin{split}
G_1&(a)\cdot \Delta_{-k}(a)(a-1)=\\
&= \textstyle\Big(\sum_{\pi_l+\beta_l(n-k)\geq n} a^{\pi_l} f_l+
\sum_{\pi_l+\beta_l(n-k)< n} a^{\pi_l} f_l\Big)\Delta_{-k}(a)(a-1) \\
&\textstyle=\big(\lambda_n f_n-\sum_{j}\lambda_jf_j\big)\Delta_{-k}(a)(a-1)\\
&\quad\textstyle +\sum_{\pi_l+\beta_l(n-k)< n}\Big[(a^{\pi_l}-1)\big((a^{\beta_l})^{-k}-1\big)
+\big((a^{\beta_l})^{-k}-1\big)\Big]\\
&\textstyle=\sum_{i=1}^{|a|-1}\gamma_i(a-1)^i\not=0, \qquad\quad  (\lambda_j, \gamma_i\in \mathbb{Z})
\end{split}
\end{equation}
because the elements $(a-1)^i$ for $1\leq i<|a|$ form a basis of the augmentation ideal $\omega(\mathbb{K}\gp{a})$
of the group ring $\mathbb{K}\gp{a}$ (see \cite{Bovdi_book}), and at least $a^{\pi_1}=1$ in (\ref{E:8}).
Consequently $G_1(a)\not=0$.

Now let us prove  that $G_1(a)x_{-}\not=0$.    Indeed, if  $G_1(a)x_-=0$,  then
\[
\textstyle
\big(\sum_{l\in I_1}a^{\pi_l} f_l\big)\Delta_{-k}(a) (a-1)=\big(\sum_{l\in I_1}a^{\pi_l} f_l\big) (a-1)a^{-k} b\widehat{a},
\]
which is impossible,  because the left hand side of   this equation is non-zero by (\ref{E:11}) and
the supports of the left and right hand sides  are different.

Consequently,  $G_1(a) x_{-}\not=0$ (Similarly it is easy to apply this technique to prove that
either    $G_2(a) x_{+}\not=0$ or $G_3(a) y_{-}\not=0$ or $G_4(a) y_{+}\not=0$, as  one of $\{x_{+}, y_{\pm}\}$
necessarily  appears in  (\ref{E:9})).
Moreover, applying a  similar argument  to the nonzero part of
\[
M(a)=G_1(a)x_{-}+G_2(a)x_{+}+ G_3(a)y_{-}+G_4(a)y_{+},
\]
we can   show  as in (\ref{E:9}) that
\[
\begin{split}
\Big((G_1&(a)+G_3(a))\cdot \Delta_{-k}(a)+(G_2(a)+G_4(a))\cdot \Delta_{k}(a)\Big)(a-1)\\
&\textstyle=\sum_{i=1}^{|a|-1}\gamma_i(a-1)^i\not=0, \qquad\quad  (\gamma_i\in \mathbb{Z})
\end{split}
\]
because the elements $(a-1)^i$ for $1\leq i<|a|$ form a basis of the augmentation ideal $\omega(\mathbb{K}\gp{a})$
of the group ring $\mathbb{K}\gp{a}$ (see \cite{Bovdi_book}), and at least $a^{\pi_1}=1$ in (\ref{E:8}).
Consequently  $M(a)\not=0$.
This yield that   $P_s\not=1$  for any $s>0$, which means that  $\gp{\frak{u}_k, \frak{z}_k}$ is a free group.

The rest of the proof follows from Lemma \ref{L:2}.\end{proof}

\smallskip

Note that if $G=D_{2p}$ is the dihedral group of order $2p$ ($p$ is a prime), then  part (i) of Theorem \ref{T:1}
does not imply that $V(\mathbb{K}G)$ contains a free group as a subgroup.

\smallskip

\begin{proof}[Proof of Theorem 2]
In \cite{Dison_Riley},  W.~Dison and T.~R.~Riley  introduced a family of one-relator
groups
\begin{equation}\label{E:12}
\mathfrak{H}_r(x,y)= \gp{x,y \mid (x,\underbrace{y,y,\ldots,y}_r)=1},\qquad (r\geq 1)
\end{equation}
that  are called  {\it Hydra} groups. These groups are   cyclic extension of a non-abelian
free group. In \cite{Baumslag_Mikhailov},  G.~Baumslag and R.~Mikhailov   proved that the
Hydra groups (similarly  to free groups) are residually torsion-free nilpotent.

Let $G$ be a group which has at least one non-normal
finite subgroup $\gp{a}$ and let  $b\in G\setminus {\mathfrak N}_G(\gp{a})$. If  $1\leq k< |a|$,
then  by Lemma \ref{L:2}
\[
w=1+(a-1)b\widehat{a}\qquad  \text{and}\qquad  \frak{u}_k=a^k+(a-1)b\widehat{a}=wa^k
\]
are nontrivial units in $\mathbb{K}G$  and
$\gp{w,a^k}= \gp{w,w a^k}= \gp{w, \frak{u}_k}$.

Using a straightforward calculation, we have
\[
\begin{split}
(\frak{u}_k,w)&
\textstyle=(\frak{u}_k\m1 w\m1)\cdot (\frak{u}_kw)\\
&\textstyle=\Big(a^{-k}-2(a^{1-k}-a^{-k})b\widehat{a}\Big)\Big(a^k+(a^{k}+1)(a-1)b\widehat{a}\Big)\\
&
\textstyle=1+\big(a^{-k}-a^{1-k}+ a-1\big)b\widehat{a}\not=1,\\
\end{split}
\]
because $b\notin {\mathfrak N}_G(\gp{a})$ and  $b\notin {\mathfrak N}_G(\gp{a^k})$, respectively.
\bigskip

Finally, it  is easy to check that
\begin{equation}\label{E:13}
(\frak{u}_k,w,w)=1.
\footnote{Note that this relation between the units   $\frak{u}_k$ and $w$ holds
in an arbitrary group ring $KG$ over a ring $K$ with $char(K)\geq 0$.}
\end{equation}

Now we are able to prove our result.

\noindent
First let the elements  $w, \frak{u}_k \in V(\mathbb{K}G)$ have infinite order (see Lemma \ref{L:2}).  Denote  a free group of rank  $2$ by $\mathfrak{F}_2$. Since  the Hydra group $\mathfrak{H}_2(x, y)$ (see (\ref{E:12})) is the third member of  the following exact sequence
\[
1 \rightarrow \gp{x, x^y}\cong \mathfrak{F}_2 \rightarrow \mathfrak{H}_2(x, y) \rightarrow \gp{y}\cong C_\infty \rightarrow 1,
\]
by (\ref{E:13})  and by Theorem \ref{T:1}(i) we obtain that
\[
H_k=\gp{w,a^k}=\gp{w, \frak{u}_k(=wa^k)}\cong \mathfrak{H}_2(\frak{u}_k, w).
\]
\noindent
Now let   $\frak{u}_k \in V(\mathbb{K}G)$ be of  finite  order $|\frak{u}_k|$ (see Lemma \ref{L:2}). Similarly to the previous case, by (\ref{E:13})  we obtain that
\[
H_k=\gp{w,a^k}=\gp{ \frak{u}_k, w}\cong \gp{x,y \mid x^{|\frak{u}_k|}=1, (x,y,y)=1},
\]
so the proof of our theorem     follows from Lemma \ref{L:2}, Theorem \ref{T:1} and  Theorem 1 of \cite{Baumslag_Mikhailov}.
\end{proof}

\begin{proof}[Proof of the Corollary.]
Let $G$ be a group which has at least one non-normal
finite cyclic subgroup $\gp{a}$ and let   $b\in G\setminus {\mathfrak N}_G(\gp{a})$.
Put $\frak{u}_k=a+(a-1)b\widehat{a}$ (i.e. $k=1$). Clearly  $|\frak{u}_k|=|a|$ by Lemma \ref{L:2}, so the result follows from Theorem \ref{T:1}(i).
\end{proof}

Our result motivates the following

\begin{problem}
When does $V(\mathbb{K}G)$ contain  the Hydra group $\mathfrak{H}_r(x,y)$ as a subgroup for $r\geq 3$?
\end{problem}

Finally, note that   if the group  $G$  has at least one non-normal
finite subgroup $\gp{a}$ and   $g,h\in G\setminus {\mathfrak N}_G(\gp{a})$, then in \cite{Bovdi_free_II},
using the technique of this paper,  it was proved that
 the  group $\gp{\; a^i+(a-1)g\widehat{a},\; a^j+(a-1)h\widehat{a}\;}$ is isomorphic either to
$C_\infty\star C_\infty$ or to  $C_{m}\star C_\infty$ or to $C_{m}\star C_{l}$, where  $1\leq  i\not= j < |a|$ and $l,m\in \{|a|, \frac{|a|}{(|a|,i)}, \frac{|a|}{(|a|,j)} \}$.

The author would like to express his gratitude to Eric Jespers  who very carefully read this paper
and made valuable remarks.


\begin{thebibliography}{1}

\bibitem{Artamonov_Bovdi}
V.~A. Artamonov and A.~A. Bovdi.
\newblock Integral group rings: groups of invertible elements and classical
  {$K$}-theory.
\newblock In {\em Algebra. {T}opology. {G}eometry, {V}ol.\ 27 ({R}ussian)},
  Itogi Nauki i Tekhniki, pages 3--43, 232. Akad. Nauk SSSR Vsesoyuz. Inst.
  Nauchn. i Tekhn. Inform., Moscow, 1989.
\newblock Translated in J. Soviet Math. {{\bf{5}}7} (1991), no. 2, 2931--2958.

\bibitem{Baumslag_Mikhailov}
G.~Baumslag and R.~Mikhailov.
\newblock Residual properties of groups defined by basic commutators.
\newblock ArXiv:1301.4629:1--17, 2013.

\bibitem{Bovdi_trace}
A.~A. Bovdi.
\newblock The isomorphism problem for integral group rings.
\newblock In {\em The material of the teaching professors' staff science XXIX
  conference of the Uzhgorod University ({R}ussian)}, Section Math., Uzhgorod,
  pages 104--109, 1975. RZHMath, 7A332, Moscow, 1976.
\newblock Dep. v VINIITI, 10.3.1976, N: 705-76 (1976).

\bibitem{Bovdi_book}
A.~A. Bovdi.
\newblock Group rings. ({R}ussian).
\newblock {\em Kiev.UMK VO}, page 155, 1988.

\bibitem{Bovdi_free_II}
V.~A. Bovdi.
\newblock Free subgroups in group rings. {II}.
\newblock {\em submitted}, pages 1--8, 2014.

\bibitem{Dison_Riley}
W.~Dison and T.~R. Riley.
\newblock Hydra groups.
\newblock {\em Comment. Math. Helv.}, 88(3):507--540, 2013.

\bibitem{Hartley_Pickel}
B.~Hartley and P.~F. Pickel.
\newblock Free subgroups in the unit groups of integral group rings.
\newblock {\em Canad. J. Math.}, 32(6):1342--1352, 1980.

\bibitem{Kourovka_book}
Editors V. D. Mazurov, E. I. Khukhro.
\newblock Kourovka Notebook: Unsolved Problems in Group Theory.
\newblock {\em Novosibirsk, Institute of Math}, :1--253, 2014.


\bibitem{Salwa}
A.~Salwa.
\newblock On free subgroups of units of rings.
\newblock {\em Proc. Amer. Math. Soc.}, 127(9):2569--2572, 1999.

\end{thebibliography}
\end{document}